\newtheorem{thm}{Theorem}[section]
\newtheorem*{maintheorem}{Main Theorem}
\newtheorem{cor}[thm]{Corollary}
\newtheorem{lem}[thm]{Lemma}
\newtheorem{prop}[thm]{Proposition}
\theoremstyle{definition}
\newtheorem{defn}[thm]{Definition}
\theoremstyle{remark}
\numberwithin{equation}{section}
\begin{document}
\bibliographystyle{plain}

\title[Definable Envelopes in $\mathfrak{M}_C$ Groups]{Definable Envelopes of Nilpotent Subgroups of Groups with
Chain Conditions on Centralizers}%
\author[Alt\i nel]{Tuna Alt\i nel}
\author[Baginski]{Paul Baginski}
\address{Institut Camille Jordan, Universit\'e Claude Bernard Lyon 1, Lyon, France 69622}
\address{Institut Camille Jordan, Universit\'e Claude Bernard Lyon 1, Lyon, France 69622}
\date{\today}

\begin{abstract}
An $\mathfrak{M}_C$ group is a group in which all chains of centralizers have finite length. 
In this article, we show that every nilpotent subgroup of 
an $\mathfrak{M}_C$ group is contained in a definable 
subgroup which is nilpotent of the same nilpotence class. 
Definitions are uniform when the lengths of chains are bounded.
\end{abstract}

\keywords{group theory, nilpotence, chains of centralizers, model theory, definability}

\maketitle

\section{Introduction}

Chain conditions have played a central role in modern infinite group theory and one of the most natural chain conditions is one on centralizers. A group is said to be $\mathfrak{M}_C$ if all chains of centralizers of arbitrary subsets are finite. If there is a uniform bound $d$ on the lengths of such chains, then $G$ has {\bf finite centralizer dimension (fcd)} and the least such bound $d$ is known as the {\bf $c$-dimension} of $G$, which we denote $\dim(G)$.

The $\mathfrak{M}_C$ property has been studied by group theorists since many natural classes of groups possess this property. See \cite{Bryant} for a classic paper on the properties of $\mathfrak{M}_C$ groups. Many groups possess the stronger property of fcd, including abelian groups, free groups, linear groups, and torsion-free hyperbolic groups. Khukhro's article on the solvability properties of torsion fcd groups \cite{Khukhro} compiles a lengthy list of groups with fcd. Khukhro's article, as well as several other foundational papers (see, for example, \cite{Bludov, DeWa, Khukhro, MyaShu, Wagner2}), have demonstrated that $\mathfrak{M}_C$ groups and groups with fcd are fairly well-behaved, for example by having Engel conditions closely linked to nilpotence.

For model theorists, the interest in these groups derives from the well-studied model-theoretic property of stability. A stable group must have fcd; in fact, it possesses uniform chain conditions on all uniformly definable families of subsets. Stable groups have an extensive literature in model theory (see \cite{Poizat} or \cite{Wagner}), however the properties of $\mathfrak{M}_C$ and fcd are appearing in other areas of the model theory of groups, such as rosy groups with NIP \cite{EKP} or P\i nar U\u{g}urlu's recent work on pseudofinite groups with fcd \cite{Pinar}.

The results of this paper reinforce Wagner's work \cite{DeWa, Wagner, Wagner2} in showing that several basic properties of (sub)stable groups derive purely from these simple group-theoretic chain conditions, which force the left-Engel elements
to be well-behaved. 
In contrast to Wagner's generalizations, which revealed 
that $\mathfrak{M}_C$ sufficed for many group-theoretic properties of stable groups, we shall show that $\mathfrak{M}_C$ suffices for a {\em logical} property of stable groups, asserting the existence of certain definable groups. 
In the end, our results will yield an alternative path to a conclusion
that also follows from Wagner's analysis of left-Engel elements.

It has been known for some time \cite[Theorem 3.17]{Poizat}, that if $G$ is a stable group and $H$ is a nilpotent (or solvable) subgroup, then there exists a definable subgroup $d(H)$ of $G$ which contains $H$ and has the same nilpotence class (derived length) as $H$. Such a subgroup $d(H)$ is called a {\bf definable envelope} of $H$. The existence of definable envelopes allowed logicians to approximate arbitrary nilpotent subgroups of stable groups with slightly ``larger'' nilpotent subgroups which were definable, i.e. manipulable with model-theoretic techniques.

Our main theorem asserts the existence of definable envelopes of nilpotent subgroups in $\mathfrak{M}_C$ groups and uniformly definable envelopes for groups with fcd. Definability here always refers to formulas in the language $\mathcal{L}_G$ of groups. These envelopes are $N_G(H)$-normal, meaning that if an element normalizes $H$, it also normalizes the envelope.

\begin{maintheorem}
Let $G$ be an $\mathfrak{M}_C$ group and $H\leq G$ a nilpotent subgroup. Then there exists a subgroup $D\leq G$ definable in the language of groups with parameters from $G$, such that $H\leq D$, $D$ is $N_G(H)$-normal, and $D$ is nilpotent of the same nilpotence class as $H$. 

Moreover, for every pair of positive integers $d$ and $n$, there exists a formula $\phi_{d,n}(x, \overline{y})$, where $\ell(\overline{y})=dn$, such that for any group $G$ of dimension $d$ and any $H\leq G$ nilpotent of class $n$, there exists a tuple $\overline{a}\in G$ such that $\phi_{d,n}(G,\overline{a})$ is a nilpotent subgroup of $G$ of class $n$ which contains $H$ and is $N_G(H)$-normal.
\end{maintheorem}

We hope that this result will prove useful in some of the current areas in logic where $\mathcal{M}_C$ groups are appearing. 
This result may also open to the door to studying some of the logical properties 
of the non-elementary classes of groups with fcd listed in \cite{Khukhro}.
It is worth mentioning that \cite{Aldama} and \cite{Milliet} contain
results on definable envelopes in elementary classes of
groups whose theories are NIP or simple, respectively.

We assume only a rudimentary knowledge of model theory and logic, namely the notions of ``definability'' and the Compactness Theorem. Readers may consult any introductory text, such as \cite{Hodges} or \cite{Poizat}, for explanations of these notions. Otherwise, the material will be primarily group-theoretic and self-contained.

In the next section, we will define relevant terms from group theory and prove some fundamental lemmas about groups in general. In the following section, we restrict our focus to $\mathfrak{M}_C$ groups and prove our main theorem and some corollaries.

\section{Preliminaries}

We write $A\leq G$ to denote that $A$ is a subgroup of $G$ and $A\lhd G$ to denote $A$ is normal in $G$. If $A\subseteq G$ then $\langle A\rangle$ denotes the subgroup generated by $A$. For any subset $A$ of $G$, the centralizer of $A$ is $C_G(A)=\{g\in G\,|\, \forall a\in A\,\, ga=ag\}$, while the normalizer of $A$ is $N_G(A)=\{g\in G\,|\, \forall a\in A\,\, g^{-1}ag\in A\}$. If $A$ and $B$ are subgroups of a group $G$, then $A$ is $N_G(B)$-normal if $N_G(B)\leq N_G(A)$.

Given $g,h\in G$, the commutator of $g$ and $h$ is $[g,h]:=g^{-1}h^{-1}gh$. Iterated commutators are interpreted as left-normed, i.e., $[x,y,z]$ will denote $[[x,y], z]$. When $A, B\subseteq G$, then we write $[A,B]:=\langle \{ [a,b]\,|\, a\in A, \, b\in B\}\rangle$. We define the lower central series of $G$ as $\gamma_1(G):=G$ and $\gamma_{k+1}(G):=[\gamma_k(G), G]$. A group $G$ is nilpotent if $\gamma_n(G)=1$ for some $n<\omega$; the least $n\geq 0$ for which $\gamma_{n+1}(G)=1$ is the nilpotence class of $G$. It is clear that a subgroup of a nilpotent group is nilpotent of equal or lesser nilpotence class.

The Hall-Witt identity relates the commutators of three elements: For all $x,y,z\in G$, 
\begin{equation}\label{Witt}
1=[x,y^{-1}, z]^y[y,z^{-1},x]^z[z,x^{-1},y]^x=[x,y,z^x][z,x,y^z][y,z,x^y]
\end{equation}
The Hall-Witt identity is used to prove the well-known 
Three Subgroup Lemma, which we state in the needed level of generality. 

\begin{lem}{\bf \cite[Three Subgroup Lemma, 5.1.10]{Robinson}}\label{3subgp}
Let $G$ be a group, $N$ a subgroup, and $K,L,$ and $M$ subgroups of $N_G(N)$. Then $[K,L,M]\leq N$ and $[L,M,K]\leq N$ together imply $[M,K,L]\leq N$.
\end{lem}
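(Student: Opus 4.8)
The plan is to read the conclusion directly off the Hall--Witt identity \eqref{Witt} and then bootstrap from individual commutators to the whole subgroup $[M,K,L]$, using crucially that $K$, $L$, and $M$ all normalize $N$. First I would substitute $x=m$, $y=k$, $z=l$ (with $m\in M$, $k\in K$, $l\in L$) into the first form of \eqref{Witt}, obtaining
\begin{equation*}
[m,k^{-1},l]^{k}\,[k,l^{-1},m]^{l}\,[l,m^{-1},k]^{m}=1.
\end{equation*}
This particular assignment is chosen so that the two ``known'' factors are exactly the ones the hypotheses control: $[k,l^{-1},m]=[[k,l^{-1}],m]$ lies in $[K,L,M]\leq N$, and $[l,m^{-1},k]=[[l,m^{-1}],k]$ lies in $[L,M,K]\leq N$.

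Next I would exploit that $l,m\in N_G(N)$, so conjugation by them preserves $N$; hence the second and third factors above remain in $N$, and the identity forces $[m,k^{-1},l]^{k}\in N$. Since $k\in N_G(N)$ as well, conjugating by $k^{-1}$ yields $[m,k^{-1},l]\in N$. As $k$ ranges over $K$ so does $k^{-1}$, so in fact every commutator $[[m,k],l]$ with $m\in M$, $k\in K$, $l\in L$ lies in $N$.

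The main obstacle is that $[M,K,L]=[[M,K],L]$ is generated by the commutators $[w,l]$ with $w$ ranging over all of $[M,K]$, not merely over the generating commutators $[m,k]$, so the previous step does not by itself finish the argument. To bridge this gap I would fix $l\in L$ and consider the set $S_l=\{g\in N_G(N)\mid [g,l]\in N\}$. Using the standard expansion $[g_1g_2,l]=[g_1,l]^{g_2}[g_2,l]$ and the corresponding identity $[g^{-1},l]=([g,l]^{-1})^{g^{-1}}$, together with the fact that every element of $N_G(N)$ conjugates $N$ into itself, I would verify that $S_l$ is a subgroup of $N_G(N)$. Since $M,K\leq N_G(N)$ we have $[M,K]\leq N_G(N)$, and by the previous paragraph $S_l$ contains every generator $[m,k]$ of $[M,K]$; hence $[M,K]\leq S_l$, i.e. $[w,l]\in N$ for all $w\in[M,K]$. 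As this holds for each $l\in L$, every generator $[w,l]$ of $[[M,K],L]$ lies in $N$, and therefore $[M,K,L]\leq N$, as desired.
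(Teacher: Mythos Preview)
The paper does not actually supply a proof of this lemma; it merely cites \cite[5.1.10]{Robinson} and remarks that the Hall--Witt identity \eqref{Witt} is the tool used. Your argument is correct and is precisely the standard derivation the paper alludes to: you extract $[m,k^{-1},l]\in N$ from \eqref{Witt} using that $K,L,M\leq N_G(N)$, and then correctly handle the passage from generators $[m,k]$ to arbitrary elements of $[M,K]$ via the subgroup $S_l$, which is the only point where the hypothesis $K,L,M\leq N_G(N)$ (rather than $N\lhd G$) requires care.
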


This article shall be concerned with chains of centralizers. However, in order to analyze them fully, we shall need a more general definition of {\em iterated} centralizers.

\begin{defn}
Let $P$ be a subgroup of $G$. We define the {\bf iterated centralizers of} $P$ {\bf in} $G$ as follows. Set $C_G^0(P)=1$ and for $n\geq 1$, let
\begin{equation*}
C_G^n(P)=\left.\left\{x\in \bigcap_{k<n} N_G(C_G^k(P))\, \right|\, [x, P]\subseteq C_G^{n-1}(P)\right\}
\end{equation*}
When $P=G$, the $n$th iterated centralizer of $G$ is more commonly known as $Z_n(G)$, the {\bf $n$th center} of $G$.
\end{defn}

The groups $Z_n(G)=C_G^n(G)$ are all characteristic in $G$, so that their definition simplifies to $Z_0(G)=\{1\}$ and $Z_{n+1}(G)=\{g\in G\,|\, [g, G]\subseteq Z_n(G)\}$ for all $n\geq 0$. The subgroup $Z_1(G)=Z(G)$ is the center of $G$. This series is known as the upper central series; a group is nilpotent of class $n$ if and only if $Z_n(G)=G$.

It is easy to show  that $P$ normalizes each $C_G^n(P)$ and consequently that each $C_G^n(P)$ is a subgroup of $G$. Furthermore, the intersections with $P$ are well-behaved: $C_G^n(P)\cap P=Z_n(P)$. If $P$ is a nilpotent subgroup of $G$ of nilpotence class $n$, then $P\leq C_G^n(P)$. These results may all be proven easily by induction, as can the following lemma due to P. Hall which relates iterated centralizers of $P$ to the lower central series of $P$.

\begin{lem}{\bf \cite[Satz III.2.8]{Huppert}} \label{huppertfact}
Let $G$ be a group and $P$ a subgroup of $G$. Then
\[
[\gamma_i(P),C_G^k(P)]\ \leq\ C_G^{k-i}(P)
\]
for all positive integers $i$ and $k$ such that $i\leq k$. In particular,
\[
[\gamma_i(G), Z_k(G)]\leq Z_{k-i}(G).
\]
\end{lem}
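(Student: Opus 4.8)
The plan is to prove the first (general) inequality by induction on $i$, the ``in particular'' clause following immediately by specializing to $P=G$: there $C_G^k(G)=Z_k(G)$ and $C_G^{k-i}(G)=Z_{k-i}(G)$, so the displayed containment becomes exactly $[\gamma_i(G),Z_k(G)]\leq Z_{k-i}(G)$. Throughout I would freely use the monotonicity of the commutator bracket on subgroups (if $A\leq A'$ then $[A,B]\leq[A',B]$), its symmetry $[A,B]=[B,A]$, and the fact, already noted in the text, that $P$ normalizes each $C_G^n(P)$.

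For the base case $i=1$, note $\gamma_1(P)=P$, and by the very definition of the iterated centralizer every $x\in C_G^k(P)$ satisfies $[x,P]\subseteq C_G^{k-1}(P)$. Since $C_G^{k-1}(P)$ is a subgroup and $[P,C_G^k(P)]=[C_G^k(P),P]$, this yields $[\gamma_1(P),C_G^k(P)]\leq C_G^{k-1}(P)$. I would record this as the two driving instances $[P,C_G^m(P)]\leq C_G^{m-1}(P)$, valid for every $m\geq 1$; the boundary $m=1$ reads $[P,C_G(P)]=1$, which holds by definition of the centralizer.

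For the inductive step, assume $[\gamma_i(P),C_G^m(P)]\leq C_G^{m-i}(P)$ for all $m\geq i$, fix $k\geq i+1$, and aim for $[\gamma_{i+1}(P),C_G^k(P)]\leq C_G^{k-i-1}(P)$. Writing $\gamma_{i+1}(P)=[\gamma_i(P),P]$, the target is $[\gamma_i(P),P,C_G^k(P)]$, which I would extract from the Three Subgroup Lemma (Lemma~\ref{3subgp}) applied with $N:=C_G^{k-i-1}(P)$ and the assignment $M:=\gamma_i(P)$, $K:=P$, $L:=C_G^k(P)$, so that the conclusion $[M,K,L]\leq N$ is precisely the target. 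The two hypotheses are $[K,L,M]=[[P,C_G^k(P)],\gamma_i(P)]\leq N$ and $[L,M,K]=[[C_G^k(P),\gamma_i(P)],P]\leq N$. For the first, the base case gives $[P,C_G^k(P)]\leq C_G^{k-1}(P)$, whence $[[P,C_G^k(P)],\gamma_i(P)]\leq[\gamma_i(P),C_G^{k-1}(P)]\leq C_G^{k-1-i}(P)=N$ by the inductive hypothesis (valid as $k-1\geq i$). For the second, the inductive hypothesis gives $[C_G^k(P),\gamma_i(P)]\leq C_G^{k-i}(P)$, and then the base case gives $[[C_G^k(P),\gamma_i(P)],P]\leq[P,C_G^{k-i}(P)]\leq C_G^{k-i-1}(P)=N$ (valid as $k-i\geq 1$). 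The boundary $k=i+1$ is consistent because $C_G^0(P)=1$ and $C_G^1(P)=C_G(P)$, so the relevant commutators collapse to the trivial subgroup.

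The step I expect to require the most care is the normalization hypothesis of the Three Subgroup Lemma: one must confirm that $K=P$, $L=C_G^k(P)$, and $M=\gamma_i(P)$ all lie in $N_G(N)=N_G(C_G^{k-i-1}(P))$. Here $P$, and hence $\gamma_i(P)\leq P$, normalizes every $C_G^n(P)$ by the remark preceding the lemma, while $C_G^k(P)\leq\bigcap_{j<k}N_G(C_G^j(P))$ by definition, and $k-i-1<k$, so $C_G^k(P)$ normalizes $C_G^{k-i-1}(P)$ as well. This is exactly the point for which the definition of the iterated centralizer was equipped with the intersection of normalizers, and it is the only place where that clause is used; once it is verified, the Three Subgroup Lemma applies and the induction closes.
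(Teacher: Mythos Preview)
Your proof is correct. The paper does not actually supply a proof of this lemma---it cites Huppert and merely remarks that the result ``may be proven easily by induction''---and your argument is precisely such an induction, executed via the Three Subgroup Lemma exactly as one would expect; the normalization check you flag is handled correctly using the built-in normalizer clause in the definition of $C_G^k(P)$.
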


Bryant (Lemma 2.5 in \cite{Bryant}) used Hall's lemma to determine 
conditions under which one could conclude a group and 
a subgroup have the same iterated centralizer. We shall pursue 
the same goal and restructure Bryant's argument for our purposes. 
Th following technical lemma 
is the heart of the proof of our main theorem.
Its proof almost reproduces Bryant's subtle argument.
We include it not only for completeness, but also to clarify how our lemma and Bryant's relate to each other, despite statements that differ considerably.

\begin{lem} \label{Bryantlemma}
Let $k\geq 1$ be an integer, $G$ be a group, and $X\leq P$ be two subgroups of $G$
satisfying the following conditions:
\begin{enumerate}
\item $C_G^i(X)=C_G^i(P)$ for all $i\in\{0,\ldots,k-1\}$;
\item $[\gamma_k(P),C_G^k(X)]=1$; 
\item $C_G(X)=C_G(P)$.
\end{enumerate}
Then $C_G^k(X)=C_G^k(P)$.
\end{lem}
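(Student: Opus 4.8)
The plan is to show the two relevant inclusions $C_G^k(P) \leq C_G^k(X)$ and $C_G^k(X) \leq C_G^k(P)$, both by exploiting hypothesis (1) to reduce statements about the $k$-th iterated centralizer to the already-known equality of lower iterated centralizers, and using (2) and (3) as the inputs that make the $k$-th step go through. First I would record the easy direction. Since $X \leq P$, the commutator condition defining membership in an iterated centralizer is harder to satisfy for $P$ than for $X$; combined with (1), which gives $C_G^i(X) = C_G^i(P)$ for $i < k$ (so in particular the normalizer intersections $\bigcap_{i<k} N_G(C_G^i(X))$ and $\bigcap_{i<k} N_G(C_G^i(P))$ coincide), one gets $C_G^k(P) \leq C_G^k(X)$ directly from the definition: if $[x,P] \subseteq C_G^{k-1}(P) = C_G^{k-1}(X)$ then a fortiori $[x,X] \subseteq C_G^{k-1}(X)$.

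For the reverse inclusion $C_G^k(X) \leq C_G^k(P)$, the strategy is the Three Subgroup Lemma applied inside a suitable normal subgroup, in the spirit of Bryant's Lemma 2.5. Fix $x \in C_G^k(X)$; I must show $[x, P] \subseteq C_G^{k-1}(P)$, equivalently (by (1)) $[x,P] \subseteq C_G^{k-1}(X)$. The natural move is an induction on $k$. The base case $k=1$ is exactly hypothesis (3): $C_G^1(X) = C_G(X) = C_G(P) = C_G^1(P)$. For the inductive step, the aim is to show that the triple $[\gamma_k(P), C_G^k(X), X]$ (and its permutation $[C_G^k(X), X, \gamma_k(P)]$) lies in a subgroup $N$ where hypothesis (2) forces things to collapse, and then deduce via Lemma \ref{3subgp} that the remaining permutation $[X, \gamma_k(P), C_G^k(X)]$ also lies in $N$ — which, unwound with Hall's Lemma \ref{huppertfact} ($[\gamma_k(P), C_G^k(X)]=1$ from (2), and more generally $[\gamma_i(P), C_G^k(X)] \leq C_G^{k-i}(X)$), should pin $x$ into $C_G^k(P)$. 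I would take $N$ to be something like $C_G^{k-1}(X) = C_G^{k-1}(P)$, checking carefully that $X$, $\gamma_k(P)$, and $C_G^k(X)$ all normalize it (for $C_G^k(X)$ this is part of the definition of iterated centralizer; for $X$ and $\gamma_k(P) \leq P$ it follows from the remarks preceding Lemma \ref{huppertfact} that $P$ normalizes each $C_G^i(P)$).

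The main obstacle will be verifying the hypotheses of the Three Subgroup Lemma precisely — i.e., confirming that the three subgroups in play genuinely sit inside $N_G(N)$ for the chosen $N$, and that two of the three bracketings $[\,\cdot\,,\,\cdot\,,\,\cdot\,]$ really do land in $N$, so that the lemma delivers the third. The first of these two bracketings should come from Hall's Lemma \ref{huppertfact} applied to $X$ (using $[\gamma_{k-1}(X), C_G^k(X)] \leq C_G^1(X) = C_G(X)$, then (3)), and the second from hypothesis (2) directly or after one more commutator manipulation; getting the index bookkeeping in Hall's lemma to match the index in $N$ is the delicate point, and this is presumably where Bryant's argument is "subtle." I would also need the general fact, provable by the same induction, that $C_G^k(X) \leq C_G^k(P)$ is equivalent to the pointwise commutator containment once (1) is in hand, so that the induction hypothesis at level $k-1$ is exactly of the form being proved. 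Once the Three Subgroup Lemma yields $[X, \gamma_k(P), C_G^k(X)] \leq N$, a final short computation combining this with (2) and the definition of $C_G^k(P)$ should close the induction.
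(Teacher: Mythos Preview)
Your proposal has the right ingredients (the easy inclusion, the base case $k=1$ via hypothesis (3), the Three Subgroup Lemma with Hall's lemma as input), but the inductive scheme you describe does not reach the conclusion. Two concrete problems. First, an induction on $k$ is vacuous here: the inductive hypothesis ``$C_G^{k-1}(X)=C_G^{k-1}(P)$'' is already part of hypothesis (1), so it gives you nothing new at the $k$-th step. Second, a single application of the Three Subgroup Lemma with $X$, $\gamma_k(P)$, $C_G^k(X)$ and $N=C_G^{k-1}(X)$ only yields $[X,\gamma_k(P),C_G^k(X)]\leq N$, which says nothing about $[P,C_G^k(X)]$; indeed, since $[\gamma_k(P),C_G^k(X)]=1$ by (2) and $[C_G^k(X),X,\gamma_k(P)]\leq [C_G^{k-1}(P),\gamma_k(P)]=1$ by Hall, both inputs are already trivial and the output is trivial. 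Your parenthetical claim $[\gamma_i(P),C_G^k(X)]\leq C_G^{k-i}(X)$ is exactly what is needed, but it is \emph{not} Hall's lemma---Hall gives $[\gamma_i(P),C_G^k(P)]\leq C_G^{k-i}(P)$, and replacing $C_G^k(P)$ by $C_G^k(X)$ is precisely the content to be proved.

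The paper's proof supplies the missing structural idea: an \emph{inner} induction on $i$ (for fixed $k$) proving the claim $[\gamma_{k-i}(P),C_G^k(X)]\leq C_G^i(X)$ for $i=0,1,\ldots,k-1$. The base $i=0$ is hypothesis (2). The step uses the Three Subgroup Lemma with $\gamma_{k-i}(P)$, $X$, $C_G^k(X)$ relative to $C_G^{i-1}(P)$: one bracketing lands there by the inductive hypothesis on $i$ (since $[\gamma_{k-i}(P),X]\leq\gamma_{k-i+1}(P)$), the other by Hall's lemma applied to $P$. The Three Subgroup Lemma then gives $[C_G^k(X),\gamma_{k-i}(P),X]\leq C_G^{i-1}(X)$, and the definition of iterated centralizer promotes this to $[C_G^k(X),\gamma_{k-i}(P)]\leq C_G^{i}(X)$. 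At $i=k-1$ this reads $[P,C_G^k(X)]\leq C_G^{k-1}(P)$, which is the hard inclusion. So the ``subtle'' point you anticipated is not index bookkeeping inside a single Three Subgroup application, but the need for a whole ladder of such applications indexed by $i$.
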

\begin{proof}
The argument 
proceeds by induction on $k$, with $k=1$ given by hypothesis $(3)$. So we assume $k>1$. 

By hypothesis $(1)$, $C_G^i(X)=C_G^i(P)$ for all $0\leq i\leq k-1$, so set $C_i=C_G^i(X)$. We claim that the normalizers of these $C_i$ contain four groups important to this proof. Namely, we claim that for all $1\leq i\leq k-1$, 
\[
(A)\ \ \ \ \ 
X \ \cup \ \gamma_{k-i}(P) \ \cup \ C_G^k(X) \ \cup \ C_G^k(P) \ \subseteq \ \bigcap_{j=0}^{k-1} N_G(C_j)\ .
\]
Since $X$ normalizes all its iterated centers, we find $X\leq N_G(C_j)$ for all $j\leq k-1$. Similarly, since $P$ normalizes all its iterated centers, we find $\gamma_{k-i}(P)\leq P\leq N_G(C_j)$ for all $j\leq k-1$. Lastly, by the definition of iterated centralizers, $C_G^k(X)\leq N_G(C_j)$ and $C_G^k(P)\leq N_G(C_j)$ for all $j\leq k-1$.

We next claim as in Bryant's proof that
\[
(B)\ \ \ \ \ 
[\gamma_{k-i}(P),C_G^k(X)]\ \leq\ C_i\ \mbox{ for }\ i=0,1,\ldots,k-1\ .
\]
We shall prove $(B)$ by induction on $i$ for $0\leq i \leq k-1$. 
Note that $i=0$ is precisely hypothesis $(2)$, so assume $i\geq 1$.  
We shall prove $(B)$ using the Three Subgroup Lemma (Lemma \ref{3subgp}) 
with $\gamma_{k-i}(P), X,$ and $C_G^k(X)$ relative to the group $C_G^{i-1}(P)=C_{i-1}$. 
By $(A)$, 
these three groups normalize $C_{i-1}$. Since $X\leq P$, we have by Lemma \ref{huppertfact} 
and by induction on $i$ that
\[
[\gamma_{k-i}(P),X,C_G^k(X)]\ \leq\ [\gamma_{k-i+1}(P),C_G^k(X)]\ \leq\ C_G^{i-1}(P) \ .
\]
By the definition of iterated centralizers, $[X,C_G^k(X)]\leq C_G^{k-1}(X)=C_{k-1}$ and thus we obtain the following chain of inequalities:
\[
[X,C_G^k(X),\gamma_{k-i}(P)]\ \leq\ [C_{k-1},\gamma_{k-i}(P)]\ 
= [C_G^{k-1}(P),\gamma_{k-i}(P)]\ \leq\ C_G^{i-1}(P) \ .
\]
The last inequality follows from Lemma \ref{huppertfact}. By the Three Subgroup Lemma (Lemma \ref{3subgp}), we conclude that 
\[
[C_G^k(X),\gamma_{k-i}(P),X]\ \leq\ C_G^{i-1}(P) = C_{i-1} = C_G^{i-1}(X) \ .
\]
Since $[C_G^k(X),\gamma_{k-i}(P)] \leq N_G(C_j)$ for all $j\leq i-1$ by $(A)$, we conclude from the definition of iterated centralizers that $[C_G^k(X),\gamma_{k-i}(P)]\leq C_G^i(X)=C_i$, yielding our claim $(B)$.

Setting $i=k-1$ in $(B)$ gives
\[
[P, C_G^k(X)]\leq C_{k-1}=C_G^{k-1}(P) \ .
\]
Yet $(A)$ with $i=k-1$ implies that $C_G^k(X)\leq N_G(C_G^j(P))$ for all $j\leq k-1$, so by the definition of iterated centralizers, $C_G^k(X)\leq C_G^k(P)$. On the other hand, since $X\leq P$, we find:
\[
[X, C_G^k(P)]\leq [P,C_G^k(P)]\leq C_G^{k-1}(P)=C_{k-1}=C_G^{k-1}(X)
\]
Again, by $(A)$ with $i=k-1$, we find that $C_G^k(P)\leq N_G(C_G^j(X))$ for all $j\leq k-1$. So by definition of iterated centralizers, $C_G^k(P)\leq C_G^k(X)$ and we have equality.
\end{proof}

We shall also need a lemma relating the iterated centralizers of three nested groups.

\begin{lem}\label{threeiterated}
Let $A\leq B\leq C$ be groups and suppose that for all $k<n$, $C_C^k(A)=C_C^k(C)$. Then $C_B^j(A)=C_C^j(A)\cap B$ for all $j\leq n$. 
\end{lem}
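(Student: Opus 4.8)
The plan is to prove this by induction on $j$, exploiting the hypothesis to show that the ``extra'' normalization conditions appearing in the definition of iterated centralizers over $C$ are automatically satisfied when we restrict attention to $B$, so that $C_C^j(A)\cap B$ satisfies the recursive definition of $C_B^j(A)$. The base case $j=0$ is trivial, since both sides are $\{1\}$. For the inductive step, assume $C_B^i(A)=C_C^i(A)\cap B$ for all $i<j$; in particular, since $j\leq n$ and the hypothesis gives $C_C^i(A)=C_C^i(C)$ for $i<n$, each $C_C^i(A)$ is characteristic in $C$, hence normalized by every element of $B$, so $B\leq \bigcap_{i<j}N_C(C_C^i(A))$. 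This is the key observation that collapses the intersection-of-normalizers clause.

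Next I would verify the two inclusions separately. For $C_B^j(A)\subseteq C_C^j(A)\cap B$: an element $x\in C_B^j(A)$ lies in $B$ by definition and satisfies $[x,A]\subseteq C_B^{j-1}(A)=C_C^{j-1}(A)\cap B\subseteq C_C^{j-1}(A)$ by induction; moreover $x$ normalizes each $C_B^i(A)$ for $i<j$, but I actually need $x$ to normalize each $C_C^i(A)$ — this follows from the observation above, since $x\in B$ and $B$ normalizes all the $C_C^i(A)$ for $i<j$. Hence $x\in C_C^j(A)$. For the reverse inclusion $C_C^j(A)\cap B\subseteq C_B^j(A)$: take $x\in C_C^j(A)\cap B$. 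Then $[x,A]\subseteq C_C^{j-1}(A)$, and since also $x\in B$ and $A\leq B$, we get $[x,A]\subseteq C_C^{j-1}(A)\cap B=C_B^{j-1}(A)$ by induction. The element $x$ normalizes each $C_C^i(A)$ for $i<j$ by definition of $C_C^j(A)$, and intersecting with $B$ shows $x$ normalizes $C_C^i(A)\cap B=C_B^i(A)$; combined with $x\in B$, this gives $x\in C_B^j(A)$.

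The only subtlety — and the step I expect to require the most care — is keeping straight that the normalization clause $\bigcap_{i<j}N_G(C_G^i(\cdot))$ in the definition is being checked in the right ambient group, and that membership ``$x$ normalizes $C_C^i(A)$'' transfers to ``$x$ normalizes $C_C^i(A)\cap B$'' precisely because $x\in B$. One must also note that the hypothesis is used twice: once to know $C_C^i(A)$ is characteristic in $C$ (so that all of $B$ normalizes it, which is what makes the restricted definition well-behaved even for those $x$ that do not a priori sit inside a normalizer), and implicitly to ensure the induction runs up through $j\leq n$. Everything else is a routine unwinding of the recursive definitions, so I would present it compactly.

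\begin{proof}
We argue by induction on $j$. For $j=0$ both sides equal $\{1\}$. Assume the result holds for all $i<j$, where $1\leq j\leq n$. Since $i<j\leq n$, the hypothesis gives $C_C^i(A)=C_C^i(C)=Z_i(C)$, which is characteristic in $C$; in particular $B$ normalizes $C_C^i(A)$ for every $i<j$, so $B\subseteq\bigcap_{i<j}N_C(C_C^i(A))$.

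First let $x\in C_B^j(A)$. Then $x\in B$ and, using the inductive hypothesis, $[x,A]\subseteq C_B^{j-1}(A)\subseteq C_C^{j-1}(A)$. Since $x\in B$, the previous paragraph shows $x\in N_C(C_C^i(A))$ for all $i<j$. By the definition of iterated centralizers, $x\in C_C^j(A)$, so $x\in C_C^j(A)\cap B$.

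Conversely, let $x\in C_C^j(A)\cap B$. Then $[x,A]\subseteq C_C^{j-1}(A)$, and since $A\leq B$ we have $[x,A]\subseteq B$, whence $[x,A]\subseteq C_C^{j-1}(A)\cap B=C_B^{j-1}(A)$ by induction. Also, by the definition of $C_C^j(A)$, $x$ normalizes $C_C^i(A)$ for every $i<j$; intersecting with $B$ and using $x\in B$, we get that $x$ normalizes $C_C^i(A)\cap B=C_B^i(A)$ for every $i<j$. By the definition of iterated centralizers (now inside $B$), $x\in C_B^j(A)$. This establishes $C_B^j(A)=C_C^j(A)\cap B$ and completes the induction.
\end{proof}
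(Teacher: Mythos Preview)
Your proof is correct and follows essentially the same approach as the paper's: both induct on $j$, use the hypothesis $C_C^i(A)=Z_i(C)$ to see that these groups are characteristic in $C$ (hence normalized by all of $B$), and then verify the two inclusions by unwinding the recursive definition of iterated centralizers. The only cosmetic differences are that the paper starts the induction at $j=1$ and first simplifies the defining condition for $C_B^{j+1}(A)$ (observing $N_B(C_B^\ell(A))=B$) before checking the inclusions, whereas you start at $j=0$ and verify the normalizer clauses separately in each direction.
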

\begin{proof}
We shall prove this lemma by induction on $1\leq j\leq n$. For $j=1$, this is just the statement that $C_B(A)=C_C(A)\cap B$. Assume the claim is true for some $j<n$. Then $C_C^\ell(A)=C_C^\ell(C)=Z_\ell(C)$ for all $\ell\leq j$ and thus by the induction hypothesis, we have for all $\ell\leq j$ that $N_B(C_B^\ell (A))=N_B(C_C^\ell(A)\cap~\!\!B)=N_B(Z_\ell(C)\cap B)$. Because $Z_\ell(C)$ is characteristic in $C$, $N_B(Z_\ell(C)\cap B)=B$. Thus, $x\in C_B^{j+1}(A)$ if and only if $x\in B$ and $[x, A]\subseteq C_B^j(A)=C_C^j(A)\cap B$, with the latter equality coming from the induction hypothesis on $j$. For any $x\in C_C^{j+1}(A)$, we have by definition that $[x,A]\in C_C^j(A)$. Since $A\leq B$, we find $C_C^{j+1}(A)\cap B\subseteq C_B^{j+1}(A)$. On the other hand, $C_B^{j+1}(A)\leq C=\bigcap_{\ell=1}^j N_C(C_C^\ell(C))=\bigcap_{\ell=1}^j N_C(C_C^\ell(A))$ and $[C_B^{j+1}(A), A]\leq C_B^j(A)=C_C^j(A)\cap B$. Thus $C_B^{j+1}(A)\subseteq C_C^{j+1}(A)$ by definition and we have $C_B^{j+1}(A)=C_C^{j+1}(A)\cap B$. By induction, $C_B^j(A)=C_C^j(A)\cap B$ for all $j\leq n$.
\end{proof}

\section{Bounded chains of centralizers and definable envelopes}\label{envelopes}

As mentioned in the introduction, several well-studied classes of groups in group theory and model theory possess chain conditions on their centralizers. We restate the definitions of these chain conditions precisely.

\begin{defn}
A group $G$ is has the {\bf chain condition on centralizers}, denoted $\mathfrak{M}_C$, if there exists no infinite sequence of subsets $A_n\subseteq G$ such that $C_G(A_n)>C_G(A_{n+1})$ for all $n<\omega$. 

A group $G$ has {\bf finite centralizer dimension (fcd)} if there is a uniform bound $n\geq 1$ on any chain $G=C_G(1)> C_G(A_1) > \ldots > C_G(A_n)$ of centralizers of subsets $A_i$ of $G$. The least bound (i.e. the length of the longest chain of centralizers) is known as the {\bf $c$-dimension} of $G$, which we will denote $\dim(G)$.
\end{defn}

Note that since $C_G(C_G(C_G(A)))=C_G(A)$ for all $A\subseteq G$, all descending chains of centralizers are finite if and only if all ascending chains are finite.  An immediate consequence of the finite chain condition is the following observation:

\begin{prop}\label{fcdfiniteintersection}
Let $G$ be an $\mathfrak{M}_C$ group. If $A\subseteq G$, then there is an $A'\subseteq A$ finite such that $C_G(A)=C_G(A')$. If $G$ has centralizer dimension $d$, then $A'$ can be chosen such that $|A'|\leq d$.
\end{prop}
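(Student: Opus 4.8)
The plan is to exploit the fact that the centralizer of a set is the intersection of the centralizers of its elements, together with the chain condition. First I would observe that $C_G(A)=\bigcap_{a\in A}C_G(\{a\})$, so every centralizer of a subset is already an intersection of centralizers of singletons. The key point is that in an $\mathfrak{M}_C$ group such an intersection cannot require infinitely many terms. To see this, enumerate the elements of $A$ in any order as $(a_i)_{i\in I}$ and, greedily, build a finite subset $A'=\{a_{i_1},\ldots,a_{i_m}\}\subseteq A$ as follows: having chosen $a_{i_1},\ldots,a_{i_j}$, if $C_G(\{a_{i_1},\ldots,a_{i_j}\})=C_G(A)$ we stop; otherwise there is some $a\in A$ with $C_G(\{a_{i_1},\ldots,a_{i_j},a\})\subsetneq C_G(\{a_{i_1},\ldots,a_{i_j}\})$, and we set $a_{i_{j+1}}=a$. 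This process produces a strictly decreasing chain of centralizers, so by the chain condition it must terminate after finitely many steps, at which point we have the desired finite $A'$ with $C_G(A')=C_G(A)$.

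For the bounded case, suppose $G$ has centralizer dimension $d$. The chain $G=C_G(\emptyset)\supseteq C_G(\{a_{i_1}\})\supseteq C_G(\{a_{i_1},a_{i_2}\})\supseteq\cdots$ constructed above is strictly decreasing at each step before termination, and by definition of $\dim(G)=d$ any strict chain of centralizers has length at most $d$. Hence at most $d$ elements are selected before the process halts, giving $|A'|\leq d$. (If $A$ itself has fewer than $d$ elements there is nothing to prove; and if $C_G(A)=G$ we may take $A'=\emptyset$.)

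I do not expect any genuine obstacle here: the statement is an immediate, standard consequence of the chain condition once one notes that centralizers of sets are intersections of centralizers of elements. The only mild subtlety is making sure the greedy construction is phrased so that it applies to arbitrary (possibly uncountable) $A$ — but since we never need to enumerate $A$ in order type $\omega$, only to pick one new element at a time while the current centralizer is still too big, no cardinality issues arise. The whole argument is a few lines once the observation above is in place.
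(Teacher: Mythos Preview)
Your argument is correct and is precisely the standard unpacking of the chain condition that the paper has in mind; indeed, the paper states the proposition as ``an immediate consequence of the finite chain condition'' and gives no proof at all. Your greedy construction is exactly how one justifies this immediacy, so there is nothing to add.
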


If $\dim(G)=1$, then for any $g\in G$, we have $G=C_G(1)\geq C_G(g)\geq C_G(G)=Z(G)$, with at most one of these inequalities strict. Thus $g\in Z(G)$ and $G$ is abelian.  Since centralizers relative to a subgroup $H$ of $G$ are calculated by simply intersecting with $H$, it is clear that a subgroup of an $\mathfrak{M}_C$ group is $\mathfrak{M}_C$. Furthermore, if $G$ has fcd and $H\leq G$, then $H$ has fcd and $\dim(H)\leq \dim(G)$. Clearly if $G$ has fcd and $H\leq C_G(A)$ for some $A\subseteq G$ with $A\not\subseteq Z(G)$, then $\dim(H)<\dim(G)$. In the next lemma, valid for all groups, we revise this critical observation with necessary conditions for a subgroup to be contained in a centralizer of a noncentral subset.

\begin{prop}\label{bottomchain}
Let $G$ be a group and $H\leq G$. Then one of the following is true:
\begin{enumerate}
\item $H\leq Z(G)$; 
\item there exists a subset $A\subseteq G$ such that $H\leq C_G(A)<G$; or
\item $C_G(H)=Z(G)$, and hence $Z(H)=Z(G)\cap H$, i.e. $Z(H)\leq Z(G)$.
\end{enumerate}
\end{prop}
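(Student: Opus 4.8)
The plan is to prove the stronger dichotomy that either (2) or (3) holds; in particular, hypothesis (1) will not be needed. So I would assume that (2) fails — that is, for every subset $A\subseteq G$ with $H\leq C_G(A)$ one necessarily has $C_G(A)=G$ — and then deduce (3).

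The key move is to apply this to the single well-chosen subset $A:=C_G(H)$. For any subgroup (indeed any subset) $H$ of $G$ one has the elementary inclusion $H\subseteq C_G(C_G(H))$, since by definition every element of $H$ commutes with every element of $C_G(H)$. Hence $H\leq C_G(A)$, and because (2) fails, $C_G(A)$ cannot be a proper subgroup of $G$; thus $C_G(A)=C_G(C_G(H))=G$. This says exactly that $C_G(H)\subseteq Z(G)$. The reverse inclusion $Z(G)\leq C_G(H)$ is immediate, so $C_G(H)=Z(G)$, which is the first assertion of (3).

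For the remaining assertions, I would simply compute the center of $H$ inside $G$: since $Z(H)=C_H(H)=H\cap C_G(H)$, substituting $C_G(H)=Z(G)$ gives $Z(H)=H\cap Z(G)$, and in particular $Z(H)\leq Z(G)$. This establishes (3) and completes the argument.

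I do not anticipate any real obstacle beyond recognizing the correct choice $A=C_G(H)$; everything else is the standard bookkeeping with centralizers recorded in the Preliminaries (notably $C_G(C_G(C_G(A)))=C_G(A)$, of which we only use the inclusion $H\subseteq C_G(C_G(H))$). The one point worth flagging is that the argument uses nothing about $H$ being abelian or nilpotent — it works for an arbitrary subgroup — and that, since the conclusion of the proposition is an inclusive "one of the following", proving "(2) or (3)" is more than sufficient.
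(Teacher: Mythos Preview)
Your argument is correct and follows essentially the same route as the paper: both proofs hinge on the choice $A=C_G(H)$ and the inclusion $H\leq C_G(C_G(H))$, then read off $C_G(H)=Z(G)$ from the failure of (2). Your observation that (1) is in fact redundant---since ``(2) or (3)'' already exhausts all cases---is a mild sharpening of the paper's trichotomy; the paper instead assumes $H\not\leq Z(G)$ first to get $C_G(H)<G$, but this extra step is not needed for the conclusion.
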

\begin{proof}
Assume $H\not\leq Z(G)$, so $C_G(H)<G$. If $C_G(H)> Z(G)$, then $H\leq C_G(C_G(H))<G$, so $A=C_G(H)$ witnesses $(2)$. Thus, if $(2)$ does not hold for $H$, then $C_G(H)=Z(G)$ and so clearly $Z(H)=C_G(H)\cap H=Z(G)\cap H$ and $Z(H)\leq Z(G)$.
\end{proof}

While both $\mathfrak{M}_C$ and fcd are preserved 
under subgroups and finite direct products \cite{MyaShu}, 
they behave poorly under quotients. 
The quotient of an $\mathfrak{M}_C$ group, even by its center, 
may fail to be $\mathfrak{M}_C$ (see \cite{Bryant}). 
This is the principal complication in the proof
of our main theorem. 

From a logical perspective, the class of $\mathfrak{M}_C$ groups is not elementary: indeed, it is consistent to have chains of arbitrarily long length, so by the Compactness Theorem there would be an elementary extension of an $\mathfrak{M}_C$ group with an infinite chain of centralizers; this extension is clearly not $\mathfrak{M}_C$. Conversely, if all groups elementarily equivalent to a group $G$ are $\mathfrak{M}_C$, then there must be a uniform bound on the lengths of chains of centralizers in $G$, i.e., $G$ has fcd. Indeed, having centralizer dimension $d$ can be expressed by a first-order formula in the language $\mathcal{L}_G$ of groups: $\not\!\exists\, x_1, \ldots , x_{d+1}, \, C(1)>C(x_1)> C(x_1, x_2) > \ldots > C(x_1, x_2, \ldots, x_{d+1})$. Therefore, groups with fcd have the advantage of being analyzable using model theoretic methods since for a fixed dimension $d$ they form an elementary class. Many classic families of groups in model theory, such as stable groups, have fcd and are not simply $\mathfrak{M}_C$.

Our goal in this section is to demonstrate the existence of definable envelopes of nilpotent subgroups of $\mathfrak{M}_C$ groups. Roughly speaking, if $G$ is an $\mathfrak{M}_C$ group and $H$ is a nilpotent subgroup, we show that there is a definable nilpotent group $D$ which contains $H$ but is not much ``larger'', in the sense that it has the same nilpotence class. In model theory, the existence of such envelopes allows one to replace an arbitrary nilpotent group with a similar one which can be manipulated using model theoretic techniques. While one might perhaps expect such envelopes to exist in the elementary class of groups with a fixed centralizer dimension, we have succeeded in showing such envelopes exist even for the non-elementary class of $\mathfrak{M}_C$ groups. The advantage of fcd in this case is uniform definability of these envelopes in terms of the dimension of the ambient group and the nilpotence class of the subgroup. We are now ready to prove our main theorem.

\begin{thm}\label{defenv}
Let $G$ be an $\mathfrak{M}_C$ group and $H\leq G$ a nilpotent subgroup. Then there exists a subgroup $D\leq G$ definable in the language of groups with parameters from $G$, such that $H\leq D$, $D$ is $N_G(H)$-normal, and $D$ is nilpotent of the same nilpotence class as $H$. 

Moreover, for every pair of positive integers $d$ and $n$, there exists a formula $\phi_{d,n}(x, \overline{y})$, where $\ell(\overline{y})=dn$, such that for any group $G$ of dimension $d$ and any $H\leq G$ nilpotent of class $n$, there exists a tuple $\overline{a}\in G$ such that $\phi_{d,n}(G,\overline{a})$ is a nilpotent subgroup of $G$ of class $n$ which contains $H$ and is $N_G(H)$-normal.
\end{thm}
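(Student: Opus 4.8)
The plan is to construct the envelope out of the iterated centralizers of $H$, and then to replace $H$ by a rich enough finitely generated subgroup so that these iterated centralizers become definable; the chain condition, through Lemma~\ref{Bryantlemma}, is exactly what licenses the replacement.

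Suppose $H$ is nilpotent of class $n$ and set $C_i:=C_G^i(H)$ for $0\le i\le n$, so that $1=C_0\le C_1\le\dots\le C_n$, each $C_i$ is invariant under $N_G(H)$, $C_n$ normalizes every $C_j$ with $j<n$ (by the definition of iterated centralizers), $H\le C_n$ (because $H$ is nilpotent of class $n$), $C_i\cap H=Z_i(H)$, and by Lemma~\ref{huppertfact} applied with $P=H$ one has $[H,C_i]=[\gamma_1(H),C_i]\le C_{i-1}$ for $1\le i\le n$. Define
\[
D\ :=\ \bigl\{\,x\in C_n\ :\ [x,C_i]\subseteq C_{i-1}\ \text{ for all }\ i=1,\dots,n\,\bigr\}.
\]
Using the identity $[xy,c]=[x,c]^y[y,c]$ and the fact that $C_n$ normalizes each $C_{i-1}$ one checks that $D$ is a subgroup; the estimate $[H,C_i]\le C_{i-1}$ together with $H\le C_n$ shows $H\le D$; the series $1=D\cap C_0\le D\cap C_1\le\dots\le D\cap C_n=D$ is, by the very definition of $D$, a central series, so $D$ is nilpotent of class $\le n$, hence of class exactly $n$ since $H\le D$; and since every $C_i=C_G^i(H)$ is $N_G(H)$-invariant, so is $D$, i.e.\ $D$ is $N_G(H)$-normal. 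So $D$ already has all the required properties apart from definability.

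To secure definability it suffices to find a finitely generated subgroup $H_0\le H$ with $C_G^i(H_0)=C_G^i(H)$ for every $i\le n$: one then replaces each $C_i$ in the definition of $D$ by $C_G^i(H_0)$, and by induction on $i$ the group $C_G^i(H_0)$ is definable over the finite parameter set $H_0$ — it equals $C_G(H_0)$ when $i=1$, and once $C_G^j(H_0)$ is definable for $j<i$ so is $C_G^i(H_0)=\{x\in\bigcap_{j<i}N_G(C_G^j(H_0)):[x,H_0]\subseteq C_G^{i-1}(H_0)\}$, every clause being first-order over a finite parameter set — whence $D$ is definable over $H_0$. Such an $H_0$ is found as follows. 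By Proposition~\ref{fcdfiniteintersection} pick a finite $F\subseteq H$ with $C_G(F)=C_G(H)$; for each $k\le n$ pick finitely many weight-$k$ commutators of elements of $H$ whose centralizer equals $C_G(\gamma_k(H))$; and let $H_0$ be generated by $F$ together with all entries of those commutators, so that $C_G(H_0)=C_G(H)$ and $C_G(\gamma_k(H_0))=C_G(\gamma_k(H))$ for every $k\le n$. Now prove $C_G^k(H_0)=C_G^k(H)$ by induction on $k$: the inclusion $C_G^k(H)\le C_G^k(H_0)$ is immediate once the lower iterated centralizers agree, and for the reverse one applies Lemma~\ref{Bryantlemma} with $X=H_0$ and $P=H$ — hypothesis~(1) being the inductive hypothesis, hypothesis~(3) being $C_G(H_0)=C_G(H)$, and hypothesis~(2), $[\gamma_k(H),C_G^k(H_0)]=1$, holding because $C_G^k(H_0)\le C_G(\gamma_k(H_0))=C_G(\gamma_k(H))$, where the inclusion is Lemma~\ref{huppertfact} (for $P=H_0$ with both indices equal to $k$) and the equality is the choice of $H_0$.

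I expect this last step to be the main obstacle, and it is the only place the chain condition is used. The naive recursion — approximate $C_1=C_G(H)$ by the centralizer of finitely many elements, then approximate $C_2$, and so on — breaks down, because $C_2/C_1$ is a centralizer computed in the quotient $N_G(C_1)/C_1$, and quotients of $\mathfrak{M}_C$ groups need not be $\mathfrak{M}_C$ (the complication flagged in Section~\ref{envelopes}); Lemma~\ref{Bryantlemma}, supplied with the auxiliary information about the lower central series of $H$, is precisely the device that trades the illegitimate descending chain argument in the quotient for a legitimate one inside $G$. Finally, for the uniform clause: when $\dim(G)=d$ every invocation of Proposition~\ref{fcdfiniteintersection} above costs at most $d$ elements, and a careful bookkeeping of the generators across the $n$ levels can be arranged so that $H_0$ is generated by a tuple $\overline a$ of length $dn$; unwinding the construction then yields an explicit formula $\phi_{d,n}(x,\overline y)$ with $\ell(\overline y)=dn$ such that $\phi_{d,n}(G,\overline a)=D$, and since for fixed $d$ the groups of centralizer dimension $d$ form an elementary class, $\phi_{d,n}$ does the job in every such $G$; alternatively, the bound on the number of parameters can be recovered from the non-uniform statement by Compactness.
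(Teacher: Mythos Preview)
Your argument for the non-uniform part is correct and takes a genuinely different route from the paper's. The paper never works with $C_G^i(H)$ globally; instead it manufactures a descending chain of definable subgroups $E_1\supseteq\cdots\supseteq E_n$, all containing $H$, with $C_{E_k}^j(H)=Z_j(E_k)$ for $j\le k$, and takes $Z_n(E_n)$ as the envelope. At stage $k$ the paper picks $d$ elements of $C_{E_{k-1}}^k(H)$ (not of $H$) whose centralizer in $E_{k-1}$ equals $C_{E_{k-1}}(C_{E_{k-1}}^k(H))$, and cuts $E_{k-1}$ down accordingly; Lemma~\ref{Bryantlemma} is then invoked inside $E_{k-1}$. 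Your construction, by contrast, writes down the envelope $D=\{x\in C_G^n(H):[x,C_G^i(H)]\subseteq C_G^{i-1}(H)\ \text{for all }i\}$ in one stroke and reduces definability to finding a single finitely generated $H_0\le H$ with $C_G^i(H_0)=C_G^i(H)$ for all $i\le n$, which you obtain by forcing $C_G(\gamma_k(H_0))=C_G(\gamma_k(H))$ for each $k$ and applying Lemma~\ref{Bryantlemma} globally. Your envelope is more explicitly described and the logic of the argument is arguably cleaner; the paper's descent, however, is more parsimonious with parameters, which matters for the uniform clause.

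That is where your proof has a gap. Your $H_0$ is generated by the entries of $d$ weight-$k$ commutators for each $k\le n$, so it carries $\sum_{k=1}^n kd=dn(n+1)/2$ generators, and these generators are the parameters in your formula for $D$. The promised ``careful bookkeeping'' to bring this down to $dn$ is not carried out, and there is no evident way to do it: to guarantee $C_G(\gamma_k(H_0))=C_G(\gamma_k(H))$ you need specific weight-$k$ commutators to lie in $\gamma_k(H_0)$, and their entries need not overlap across levels. Your fallback to Compactness does not help either: Compactness can at best produce \emph{some} uniform formula over the elementary class of groups of dimension $d$, but it cannot reduce the number of parameters to a prescribed bound. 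The paper achieves exactly $dn$ because at each of the $n$ stages it spends $d$ parameters chosen from $C_{E_{k-1}}^k(H)$, a pool not available in your approach since you insist on parameters from $H$. So your proof yields the theorem with $\ell(\overline y)=dn(n+1)/2$ in place of $dn$; to recover the stated bound you would have to either adopt the paper's descent or find a new idea for trimming the generating set of $H_0$.
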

\begin{proof}
Let $G$ be a $\mathfrak{M}_C$ group and $H$ be a nilpotent subgroup of $G$ of class $n$. For all $1\leq k\leq n$, we will construct a descending chain of definable subgroups $(E_k)_{k=1}^n$ having the following properties:
\begin{enumerate}
\item each $E_k$ is definable;
\item each $E_k$ contains $H$;
\item for each $j\in\{1,\ldots,k\}$ and each subgroup $H\leq P\leq E_k$,
\[
\ \ C_{E_k}^j(H)\ \ =\ \ C_{E_k}^j(P)\ \ =\ \ C_{E_k}^j(E_k)\ \ =\ \ Z_j(E_k)\ \ ;
\]
\item each $E_k$ is $N_G(H)$-normal.
\end{enumerate}

By the chain condition, the collection of all centralizers of $G$ 
which contain $H$ has a least element, $C_G(A)$, which must be $N_G(H)$-normal. 
By Proposition \ref{fcdfiniteintersection}, $A$ can be taken to 
be a finite set $A_1=\{x_{1,1}, \ldots , x_{1, m_1}\}$, with $m_1\leq \dim(G)$ 
if $G$ has fcd. Set $E_1=C_G(A)$. 
Since $C_G(A)$ is $N_G(H)$-normal, $N_G(H)\leq N_G(HZ(C_G(A)))$. On the other
hand, any $N_G(HZ(C_G(A)))$-conjugate of $E_1$ still contains $H$.
By the minimal choice of $C_G(A)$, we conclude that
$E_1=C_G(A)$ is $N_G(HZ(C_G(A)))$-normal. Thus,
without loss of generality we may replace $H$ with $HZ(C_G(A))$. 
By Proposition \ref{bottomchain}, we conclude $C_{E_1}(H)=Z(H)=Z(E_1)=C_{E_1}(E_1)$.  
In fact, by this proposition, for any group $P$ with $H\leq P\leq E_1$, 
we have $C_P(H)=Z(H)=Z(P)=C_{E_1}(P)$. So we have successfully constructed $E_1$.

Suppose now that the $E_j$ have been constructed for $1\leq j<k$ and 
consider $P$ such that $H\leq P\leq E_{k-1}$. By property (3) and Lemma \ref{threeiterated}, we have 
\[
\ \ C_P^j(H)\ = \ C_{E_{k-1}}^j(H)\cap P \ \ \ (\bullet )
\]
for all $1\leq j\leq k$.
Before defining $E_k$, we first define an intermediate definable subgroup for every
$h\in C_{E_{k-1}}^k(H)$:
\[
\ \ E_k(h)\ =\ \{\ x\in E_{k-1}\ |\ [x,h]\in Z_{k-1}(E_{k-1})\ \}\ .
\]
Since $Z_{k-1}(E_{k-1})\lhd E_{k-1}$, $E_k(h)$ is a subgroup of $E_{k-1}$. It is definable in $E_{k-1}$ with $h$ as parameter. Since by induction on $k$, $C_{E_{k-1}}^{k-1}(H)=Z_{k-1}(E_{k-1})$ and $H\leq E_{k-1}$, $H\leq E_k(h)$. Clearly, $h\in E_k(h)$ as well.

Since $h\in E_k(h)$, we find that $[E_k(h),h]\leq Z_{k-1}(E_{k-1})\cap E_k(h)\leq Z_{k-1}(E_k(h))$. As a result, $h\in Z_k(E_k(h))$. By Lemma \ref{huppertfact},  $[\gamma_k(E_k(h)), Z_k(E_k(h))]=1$, we conclude that
\[
\ \ [\gamma_k(E_k(h)),h]\ =\ 1\ .\ \ \ (\bullet \bullet)
\]
By the $\mathfrak{M}_C$ condition on the ambient group,  there exist $x_{k,1},\ldots,x_{k,m_k}\in C_{E_{k-1}}^k(H)$ for some  $m_k\in\mathbb{N}^*$ such that  $C_{E_{k-1}}(C_{E_{k-1}}^k(H))=C_{E_{k-1}}(x_{k,1},\ldots,x_{k,m_k})$. We then define
\[
\ \ E_k\ =\ \bigcap_{i=1}^{m_k} E_k(x_{k,i})\ .
\]
The subgroup $E_k$ is definable in $E_{k-1}$ with parameters $x_{k,1},\ldots, x_{k,m_k}$ and contains $H$. If $G$ has dimension $d$, then we may take $m_k=d$, so this definition of $E_k$ in terms of $E_{k-1}$ is uniform in terms of $d$ parameters. Moreover, $H\leq E_k\leq E_{k-1}$. By the choice of the $x_i$, $(\bullet \bullet)$ implies that 
\[
[\gamma_k(E_k), C_{E_{k-1}}^k(H)]=1\,. \ \ (\bullet \bullet \bullet)
\]
By property (3) of $E_{k-1}$, we have $C_{E_{k-1}}^i(H)=C_{E_{k-1}}^i(E_k)$ 
for all $0\leq i\leq k-1$. In particular (as $k\geq 2$), 
$C_{E_{k-1}}(H)=Z(E_{k-1})=C_{E_{k-1}}(E_k)$. 
By Lemma \ref{Bryantlemma}, $C_{E_{k-1}}^k(H)=C_{E_{k-1}}^k(E_k)$. 
Therefore, for all $h\in C_{E_{k-1}}^k(H)$, $[E_k, h]\subseteq C_{E_{k-1}}^{k-1}(E_k)=Z_{k-1}(E_{k-1})$ 
by property (3) of $E_{k-1}$. In other words, $E_k\leq E_k(h)$ for all $h\in C_{E_{k-1}}^k(H)$ and hence 
\[
E_k=\bigcap_{h\in C_{E_{k-1}}^k(H)} E_k(h)\,.
\]
By property (4) of $E_{k-1}$, we conclude that $C_{E_{k-1}}^k(H)$ 
and $Z_{k-1}(E_{k-1})$ are $N_G(H)$-normal and thus so is $E_k$, establishing the fourth property for $E_k$.

We are left with establishing the third property for $E_k$. Let $P$ be any subgroup such that $H\leq P\leq E_k\leq E_{k-1}$. The statement $(\bullet)$ demonstrates that $C_{E_k}^j(H)\ = \ C_{E_{k-1}}^j(H)\cap E_k$ for all $1\leq j\leq k$. Together with $(\bullet \bullet \bullet)$, we conclude that $[\gamma_k(E_k), C_{E_k}^k(H)]=1$, and hence $[\gamma_k(P), C_{E_k}^k(H)]=1$, which is condition $(2)$ in Lemma \ref{Bryantlemma}. Condition $(3)$ of this lemma follows from the fact in the first paragraph that $C_{E_k}(E_k)=C_{E_k}(P)=C_{E_k}(H)=Z(E_k)=Z(H)$.

It remains to show that $C_{E_k}^j(H)=C_{E_k}^j(P)=C_{E_k}^j(E_k)=Z_j(E_k)$ for $1\leq j\leq k$ in order to complete the induction on $k$. Note that we always have $C_G^n(G)=Z_n(G)$, so the first two equalities are the only ones we need to establish. If we have succeeded in showing $C_{E_k}^j(H)=C_{E_k}^j(P)=C_{E_k}^j(E_k)=Z_j(E_k)$ for all $1\leq j<k$, then Lemma \ref{Bryantlemma} guarantees that $C_{E_k}^k(H)=C_{E_k}^k(P)=C_{E_k}^k(E_k)$. So it remains for us to show that $C_{E_k}^j(H)=C_{E_k}^j(P)=C_{E_k}^j(E_k)=Z_j(E_k)$ for all $1\leq j<k$. We shall prove this by induction on $j<k$. We have already noted that for $j=1$, $C_{E_k}(E_k)=C_{E_k}(P)=C_{E_k}(H)=Z(E_k)=Z(H)$. Assume it is true for all $\ell<j$. 
We know from $(\bullet)$ and property (3) for $E_{k-1}$ that
\[
\ \ \ C_{E_k}^j(H)\ = \ C_{E_{k-1}}^j(H)\cap E_k \ = \ Z_j(E_{k-1})\cap E_k.
\]
Similarly, $C_{E_k}^j(P) = Z_j(E_{k-1})\cap E_k$. Yet $Z_j(E_{k-1})\cap E_k\leq Z_j(E_k)$, so $C_{E_k}^j(P)$ and $C_{E_k}^j(H)$ are both subgroups of $Z_j(E_k)$. By the induction hypothesis on $j-1$, 
\[
\ \ [H, Z_j(E_k)] \ \leq \ [P, Z_j(E_k)] \ \leq \ [E_k, Z_j(E_k)]\ \leq \ Z_{j-1}(E_k) = C_{E_k}^{j-1}(H) = C_{E_k}^{j-1}(P).
\]
By the induction hypothesis on $\ell<j$, we know $N_{E_k}(C_{E_k}^\ell(H))=N_{E_k}(C_{E_k}^\ell(P))=N_{E_k}(Z_\ell(E_k))=E_k$, 
so by the definition of iterated centralizers,
we determine $Z_j(E_k)\leq C_{E_k}^j(H)\cap C_{E_k}^j(P)$. Therefore $C_{E_k}^j(E_k)=Z_j(E_k)=C_{E_k}^j(P)=C_{E_k}^j(H)$, as desired.

This completes the induction on $k$ constructing the descending chain of subgroups $E_k$. The definable envelope of $H$ is $Z_k(E_k)$, where $k$ is the nilpotence class of $H$. Indeed, $H\leq C_{E_k}^k(H)=Z_k(E_k)$ by construction. As noted during the proof, if $G$ has dimension $d$, the iterative construction of the $E_k$ is uniform and requires $d$ parameters at each stage for a total of $dn$ parameters from $G$. Thus for groups with a fixed centralizer dimension, the definable envelopes of nilpotent subgroups are uniformly definable.
\end{proof}

\begin{cor}\label{normaldef}
Let $G$ be an $\mathfrak{M}_C$ group and $H\lhd G$ be a normal nilpotent subgroup. Then $H\leq D\leq G$ for some normal, definable subgroup $D$ of $G$ which is nilpotent of the same class as $H$.

For every pair of positive integers $d$ and $n$, there is a formula $\phi_{d,n}(x,\overline{y})$ such that if $G$ is a group of dimension $d$ and $H\lhd G$ is a normal nilpotent subgroup of nilpotence class $n$, then for some $\overline{a}\in G$, $\phi_{d,n}(G,\overline{a})$ is a normal nilpotent subgroup of class $n$ which contains $H$.
\end{cor}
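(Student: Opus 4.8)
The plan is to derive this corollary directly from Theorem \ref{defenv}, observing that the $N_G(H)$-normality built into the envelope already upgrades to full normality once $H$ is normal in $G$. Indeed, if $H\lhd G$, then $N_G(H)=G$, so any $N_G(H)$-normal subgroup $D$ satisfies $G=N_G(H)\leq N_G(D)$, i.e.\ $D\lhd G$. Hence the first assertion is immediate: apply Theorem \ref{defenv} to $H$ to obtain a definable, $N_G(H)$-normal subgroup $D\leq G$ with $H\leq D$ and $D$ nilpotent of the same class as $H$; this $D$ is automatically normal in $G$, as required.

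For the uniform statement, I would invoke the ``moreover'' clause of Theorem \ref{defenv}, which furnishes, for each pair of positive integers $d$ and $n$, a formula $\phi_{d,n}(x,\overline{y})$ with $\ell(\overline{y})=dn$ such that in any group $G$ of dimension $d$ and any $H\leq G$ nilpotent of class $n$, there is a tuple $\overline{a}\in G$ for which $\phi_{d,n}(G,\overline{a})$ is a nilpotent subgroup of class $n$ containing $H$ and $N_G(H)$-normal. When $H\lhd G$, the very same formula $\phi_{d,n}$ and tuple $\overline{a}$ suffice, since the $N_G(H)$-normality of $\phi_{d,n}(G,\overline{a})$ now says precisely that $\phi_{d,n}(G,\overline{a})\lhd G$. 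So one simply reuses the formula from Theorem \ref{defenv}.

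Since the corollary is an immediate specialization, I do not expect any genuine obstacle; the only point needing care is the elementary remark that $N_G(H)=G$ whenever $H\lhd G$, which is exactly why Theorem \ref{defenv} was stated with $N_G(H)$-normality rather than mere definability. If desired, one could append a short remark noting that the same argument yields $K$-normal envelopes whenever $H$ is $K$-normal for a subgroup $K\leq G$.
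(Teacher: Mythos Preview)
Your proposal is correct and matches the paper's own proof essentially verbatim: both simply invoke Theorem~\ref{defenv} and observe that when $H\lhd G$ one has $N_G(H)=G$, so the $N_G(H)$-normality of the envelope $D$ forces $D\lhd G$. The uniform clause is handled exactly as you suggest, by reusing the formula $\phi_{d,n}$ from Theorem~\ref{defenv}.
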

\begin{proof}
Let $H\lhd G$ be a normal nilpotent subgroup. By Theorem \ref{defenv}, there is a definable nilpotent subgroup $D$ of $G$ which is nilpotent of the same class as $H$ such that $H\leq D\leq G$. This group $D$ is $N_G(H)$-normal, and hence normal in $G$. 
\end{proof}

The subgroup generated by all normal nilpotent subgroups of $G$ is called the {\bf Fitting group} $F(G)$ of $G$. We shall say the {\bf definable Fitting group} $dF(G)$ of $G$ is the subgroup generated by all definable normal nilpotent subgroups of $G$. Clearly, for any group $G$ we have $dF(G)\leq F(G)$ and Corollary \ref{normaldef} indicates that in a $\mathfrak{M}_C$ group, $F(G)=dF(G)$. For completeness, we mention that Wagner \cite[Definition 1.1.7]{Wagner} defined a similar concept known as the {\em definable generalized Fitting subgroup}, $F^*(G)$. We have the following relationship between these three notions: $dF(G)\leq F(G)\leq F^*(G)$ in all groups $G$. None of these groups need be definable, though it may occur that in a particular model of $\textrm{Th}(G)$, these groups coincide with definable groups. For $dF(G)$, such a situation is often not a coincidence.

\begin{lem}\label{defFitting}
Let $G$ be a group and suppose the definable Fitting subgroup $dF(G)$ coincides with a definable nilpotent subgroup $\phi(G,\overline{a})$. Then $dF(G)$ is $\emptyset$-definable in $Th(G)$, the theory of $G$ in the language $\mathcal{L}_G$ of groups.
\end{lem}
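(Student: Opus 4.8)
The plan is to show that although $\overline{a}$ is a specific parameter tuple from $G$, the property ``$\phi(G,\overline{b})$ equals the definable Fitting subgroup'' is first-order expressible, and this will give us a $\emptyset$-definable description of $dF(G)$. The starting observation is that $dF(G)$ is normal in $G$: it is generated by normal subgroups, hence is itself normal (and nilpotent, by hypothesis, since it equals $\phi(G,\overline{a})$). The key point is to pin down, via a formula without parameters, exactly which tuples $\overline{b}$ have $\phi(G,\overline{b})$ equal to $dF(G)$.

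First I would write down the first-order conditions on a tuple $\overline{b}$ that express: (i) $\phi(x,\overline{b})$ defines a subgroup of $G$; (ii) that subgroup is normal in $G$; and (iii) that subgroup is nilpotent of class $\leq c$, where $c$ is the nilpotence class of $dF(G)=\phi(G,\overline{a})$ — all of these are expressible by $\mathcal{L}_G$-sentences with $\overline{b}$ as free variables, using that nilpotence of bounded class is a first-order condition (the vanishing of the appropriate iterated commutator). Call the conjunction of these conditions $\psi(\overline{b})$. Any $\overline{b}$ satisfying $\psi$ yields a definable normal nilpotent subgroup, hence $\phi(G,\overline{b})\leq dF(G)$. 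Next I would add the condition (iv) that $\phi(G,\overline{a})\leq\phi(G,\overline{b})$; but $\overline{a}$ is a parameter, so instead I observe that $\overline{a}$ itself satisfies $\psi$, and $dF(G)$ is the union of all $\phi(G,\overline{b})$ with $\psi(\overline{b})$. Thus I can define
\[
dF(G)\ =\ \{\, x\in G\ \mid\ \exists\,\overline{b}\ \big(\psi(\overline{b})\ \wedge\ \phi(x,\overline{b})\big)\,\}.
\]
The right-hand side is $\emptyset$-definable. It contains every definable normal nilpotent subgroup (each such is $\phi(G,\overline{b})$ for some $\overline{b}\models\psi$, once we note $c$ bounds its class because it lies inside $dF(G)$), so it contains $dF(G)$; conversely every $\phi(G,\overline{b})$ with $\psi(\overline{b})$ is a definable normal nilpotent subgroup, hence contained in $dF(G)$, so the union is contained in $dF(G)$. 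Hence the two sides are equal, and $dF(G)$ is $\emptyset$-definable.

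The one subtlety — and the step I expect to require the most care — is circularity in the bound $c$: the formula $\psi$ needs the constant $c=$ nilpotence class of $dF(G)$, which a priori we only know in $G$, not uniformly across $\mathrm{Th}(G)$. But this is fine: $c$ is a fixed natural number, so ``nilpotent of class $\leq c$'' is a legitimate parameter-free formula, and the sentence $\exists\,\overline{b}\,\psi(\overline{b})$ (witnessed by $\overline{a}$ in $G$) is in $\mathrm{Th}(G)$; the displayed formula above is therefore the desired $\emptyset$-definition in $\mathrm{Th}(G)$. One should double-check that no larger class can appear: any definable normal nilpotent subgroup of $G$ is contained in $dF(G)=\phi(G,\overline{a})$, which has class exactly $c$, so its class is $\leq c$, and conversely $\phi(G,\overline{a})$ realizes class $c$; thus the bound $c$ is exactly right and the formula captures $dF(G)$ on the nose. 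The remainder is the routine verification that the conditions (i)–(iii) are genuinely first-order, which follows from the standard fact that ``$\phi(G,\overline{b})$ is a subgroup'', ``$\ldots$ is normal'', and ``$\ldots$ is nilpotent of class $\leq c$'' can each be written as $\mathcal{L}_G$-formulas in $\overline{b}$.
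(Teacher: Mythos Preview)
Your argument is correct and gives an explicit parameter-free definition of $dF(G)$: the formula $\exists\,\overline{b}\,(\psi(\overline{b})\wedge\phi(x,\overline{b}))$ works because $\overline{a}$ witnesses the existential for every $x\in dF(G)$, while any witnessing $\overline{b}$ yields a definable normal nilpotent subgroup contained in $dF(G)$. One small correction: your parenthetical justification ``each such is $\phi(G,\overline{b})$ for some $\overline{b}\models\psi$'' is not right---definable normal nilpotent subgroups need not be instances of the particular formula $\phi$---but you do not need this. The inclusion $dF(G)\subseteq\{x:\exists\overline{b}\,(\psi(\overline{b})\wedge\phi(x,\overline{b}))\}$ follows immediately from the single observation (which you already made) that $\overline{a}\models\psi$ and $\phi(G,\overline{a})=dF(G)$.

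The paper's route is genuinely different. Rather than writing down an explicit formula, it argues by invariance: first it shows that in any elementary extension $G'\succeq G$ one still has $\phi(G',\overline{a})=dF(G')$ (by transferring, for every formula $\theta$ and every $k$, the sentence ``every normal nilpotent $\theta$-group of class $k$ is contained in $\phi(\cdot,\overline{a})$''), and then it invokes Svenonius' theorem: if $\phi(G,\overline{a})$ were not $\emptyset$-definable, some elementary extension would admit an $\mathcal{L}_G$-automorphism moving it, contradicting that $dF(G')$ is characteristic. Your approach is more elementary and constructive---no appeal to Svenonius/Beth, and one sees the defining formula---while the paper's approach isolates the conceptual reason (automorphism-invariance of $dF$) and leans on a standard definability theorem to finish.
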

\begin{proof}
Suppose $dF(G)=\phi(G,\overline{a})$ for some $\mathcal{L}_G$ formula $\phi(x,\overline{y})$ and $\overline{a}\in G$. Suppose $dF(G)$ has nilpotence class $n$. In any elementary extension $G'\succeq G$, $\phi(G', \overline{a})$ is a normal group of nilpotence class $n$, and thus contained in $dF(G')$. For any formula $\theta(x,\overline{y})$ in the language of groups and any integer $k\geq 1$, the following sentence is true in $G$:
\begin{multline*}
G\models \forall \overline{y} \, (\, (\,\theta(x, \overline{y}) \mbox{ is a normal nilpotent group of class $k$}\,) \rightarrow \\ \forall x\, (\, \theta(x, \overline{y})\rightarrow \phi(x,\overline{a})\,)).
\end{multline*}
Therefore the same sentence is true in $G'$, so that $\phi(G',\overline{a})$ contains all definable normal nilpotent subgroups of $G'$. By definition of the definable Fitting group, $\phi(G',\overline{a})=dF(G')$.

We now claim that $dF(G)=\phi_{d,n}(G,\overline{a})$ can be defined without parameters in $\mathcal{L}_G$, the language of groups. If not, by Svenonius' Theorem \cite[Thm 9.2]{Poizatmodel}, there is an elementary extension $(G', \overline{a}')\succeq (G, \overline{a})$ and an $\mathcal{L}_G$-automorphism $\sigma$ of $G'$ which does not preserve $\phi_{d,n}(G',\overline{a}')$. Yet $\overline{a}$ and $\overline{a}'$ have the same $\mathcal{L}$-type, so $\phi_{d,n}(G',\overline{a}')$ contains all definable normal nilpotent subgroups of $G'$, and by the same argument as above, $\phi_{d,n}(G',\overline{a}')=dF(G')$. Since $dF(G')$ is characteristic, it is preserved by all automorphisms of $G'$, a contradiction. So the definable Fitting subgroup is $\emptyset$-definable in $Th(G)$.
\end{proof}

We now address the issue of the definability of the Fitting group $F(G)$ directly. Of great importance will be the following result:

\begin{lem}[Theorem 1.2.11, \cite{Wagner}]\label{MCFittingnilpotent}
Let $G$ be $\mathfrak{M}_C$. Then the Fitting subgroup $F(G)$ is nilpotent.
\end{lem}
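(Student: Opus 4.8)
The plan is to reduce the statement, via Fitting's classical theorem, to the assertion that a locally nilpotent $\mathfrak{M}_C$ group is nilpotent, and then to extract a finite bound on the nilpotence class from the chain condition. Recall that Fitting's theorem (see \cite{Robinson}) says that if $M,N\lhd G$ are nilpotent of classes $a$ and $b$, then $MN$ is nilpotent of class at most $a+b$. Consequently $F(G)$ is the union of an upward-directed family of normal nilpotent subgroups, so in particular $F(G)$ is locally nilpotent; and as a subgroup of the $\mathfrak{M}_C$ group $G$ it is itself $\mathfrak{M}_C$. It therefore suffices to prove: \emph{every locally nilpotent $\mathfrak{M}_C$ group is nilpotent}. (It is occasionally convenient to keep $G$ in play and use that every finitely generated subgroup of $F(G)$ lies in a subgroup which is normal in $G$ and nilpotent, rather than working with $F(G)$ abstractly.)

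For the reduced statement the first step is the local-to-global principle that $L\neq 1$ forces $Z(L)\neq 1$ for $L$ locally nilpotent and $\mathfrak{M}_C$. By $\mathfrak{M}_C$, among all centralizers $C_L(A)$ with $A\subseteq L$ there is a minimal one, which by Proposition~\ref{fcdfiniteintersection} has the form $C_L(S)$ with $S$ finite; minimality gives $C_L(S)=C_L(S)\cap C_L(y)\leq C_L(y)$ for every $y\in L$, hence $C_L(S)\leq Z(L)$, and since $Z(L)\leq C_L(S)$ always, in fact $C_L(S)=Z(L)$. As $\langle S\rangle$ is finitely generated it is nilpotent, so either $\langle S\rangle=1$ and $Z(L)=C_L(S)=L$, or $1\neq Z(\langle S\rangle)\leq C_L(S)=Z(L)$; either way $Z(L)\neq 1$. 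The same argument inside any nontrivial normal subgroup $M$ of $L$ shows $Z(M)\neq 1$, and Lemma~\ref{huppertfact} (with $i=k=j$) supplies the companion fact $[\gamma_j(L),Z_j(L)]=1$, i.e. $\gamma_j(L)\leq C_L(Z_j(L))$ for all $j$.

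The heart of the argument is then to show that the upper central series of $L$ reaches $L$ after finitely many steps. The chain $L=C_L(Z_0(L))\geq C_L(Z_1(L))\geq C_L(Z_2(L))\geq\cdots$ is a descending chain of centralizers, so by $\mathfrak{M}_C$ it stabilizes from some index $n$ on; since $\gamma_j(L)\leq C_L(Z_j(L))$, this ties the lower central terms $\gamma_j(L)$ for $j\geq n$ tightly to the upper central terms, and one wants to argue that this coupling, together with $Z(L)\neq 1$ and the chain condition applied to centralizers of the $\gamma_j(L)$ and of elements of $L$, is incompatible with $L$ being non-nilpotent. I expect this last step to be the genuine obstacle. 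The natural move is to pass to $L/Z(L)$, which is again locally nilpotent, and induct on the nilpotence class — but $\mathfrak{M}_C$ is \emph{not} inherited by quotients, not even by quotients modulo the centre, precisely the phenomenon flagged in the discussion preceding Theorem~\ref{defenv}. The way around it, following Wagner \cite{Wagner}, is to avoid quotients entirely and analyze left Engel elements directly: every element of a locally nilpotent group is a left Engel element, the $\mathfrak{M}_C$ condition bounds its Engel degree, and a bounded left Engel element of an $\mathfrak{M}_C$ group lies in a fixed finite term $Z_n(G)$ of the upper central series; since every element of $F(G)$ is of this kind, $F(G)\leq Z_n(G)$ is nilpotent. (When $G$ has finite centralizer dimension $\dim(G)$, tracking the lengths of the chains used above yields an explicit bound on the class of $F(G)$ in terms of $\dim(G)$.)
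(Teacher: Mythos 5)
First, a point of reference: the paper does not prove this lemma at all --- it is quoted from Wagner's \emph{Stable Groups} (Theorem 1.2.11) --- so your proposal stands or falls on its own, and it falls at the reduction step. From Fitting's theorem you correctly conclude that $F(G)$ is locally nilpotent and, being a subgroup, $\mathfrak{M}_C$; but the statement you reduce to, \emph{every locally nilpotent $\mathfrak{M}_C$ group is nilpotent}, is false. Take $G=\mathbb{Z}(2^\infty)\rtimes\langle\alpha\rangle$ with $\alpha$ acting by inversion. Every finitely generated subgroup lies in a finite dihedral $2$-group, so $G$ is locally nilpotent; its centralizers are only $G$, $\mathbb{Z}(2^\infty)$, the Klein four subgroups $\{1,t,a\alpha,ta\alpha\}$ (with $t$ the involution of the Pr\"ufer group) and $Z(G)=\{1,t\}$, so $G$ even has finite centralizer dimension; yet $\gamma_2(G)=\gamma_3(G)=\cdots=\mathbb{Z}(2^\infty)$, so $G$ is not nilpotent (here $F(G)=\mathbb{Z}(2^\infty)$, consistent with the lemma). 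Thus local nilpotence plus $\mathfrak{M}_C$ is strictly weaker than what is needed: any correct proof must exploit the fact that $F(G)$ is generated by subgroups that are nilpotent \emph{and normal in $G$}, for instance by combining Fitting's class bound with the chain condition applied in $G$ to centralizers $C_G(N)$ of normal nilpotent subgroups, followed by an iterated-centralizer argument of Bryant's type (Lemma \ref{Bryantlemma} of this paper is exactly that kind of tool); your parenthetical remark about keeping $G$ in play gestures at this but is never used.

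Second, you concede the decisive step yourself (``I expect this last step to be the genuine obstacle''), and the fallback you sketch does not fill the gap. The claim that a bounded left Engel element of an $\mathfrak{M}_C$ group lies in some finite term $Z_n(G)$ of the upper central series is false: $S_3$ is finite, hence $\mathfrak{M}_C$, has $Z_n(S_3)=1$ for all $n$, yet its $3$-elements are bounded left Engel and generate $F(S_3)=A_3$; the correct statement is Wagner's $F(G)=\overline{L}(G)$, which places such elements in $F(G)$, not in the hypercenter. The intermediate claim that ``the $\mathfrak{M}_C$ condition bounds the Engel degree'' also fails: in the example above $\alpha$ is a left Engel element of $G$ whose Engel length grows with the order of the Pr\"ufer element, and indeed $\alpha\notin F(G)$. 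Finally, invoking $F(G)=\overline{L}(G)$ here is circular in the context of this paper: as the authors note immediately after the lemma, Wagner obtains that identity (and the resulting $\emptyset$-definability of $F(G)$) \emph{using} the nilpotence of $F(G)$, which is precisely what you set out to prove. So the proposal contains a genuine gap at its core, both in the reduction and in the proposed completion.
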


Using this result and the work of Bludov \cite{Bludov}, Wagner has already showed that in an $\mathfrak{M}_C$ group, $F(G)$ is $\emptyset$-definable. In fact, his result states that $F(G)=\overline{L}(G)$, the set of bounded left Engel elements (\cite[Corollary 2.5]{Wagner2}). The nilpotence of $F(G)$ (Lemma \ref{MCFittingnilpotent}), then implies that $F(G)$ consists of those elements $x\in G$ which satisfy the $n$th left Engel condition for a fixed $n$. We shall arrive at the $\emptyset$-definability of $F(G)$ by using the avenue of definable envelopes instead.

\begin{cor}\label{Fittingdef}
Let $G$ be an $\mathfrak{M}_C$ group. The Fitting subgroup $F(G)$ coincides with a $\emptyset$-definable, nilpotent subgroup of $G$. If $G$ has fcd, $F(G)$ is itself $\emptyset$-definable in $Th(G)$.
\end{cor}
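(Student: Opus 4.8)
The plan is to combine Lemma \ref{MCFittingnilpotent} with Corollary \ref{normaldef} and Lemma \ref{defFitting}. By Lemma \ref{MCFittingnilpotent}, $F(G)$ is nilpotent; let $n$ be its nilpotence class. Since $F(G)$ is in particular a normal nilpotent subgroup of $G$, Corollary \ref{normaldef} produces a definable normal nilpotent subgroup $D$ of $G$ of nilpotence class $n$ with $F(G)\leq D\leq G$. But $D$ is itself a normal nilpotent subgroup, so $D\leq F(G)$ by the definition of the Fitting group; hence $D=F(G)$. Thus $F(G)$ coincides with a definable nilpotent subgroup of $G$. Moreover, since every definable normal nilpotent subgroup of $G$ is contained in $F(G)=D$, and $D$ is itself such a subgroup, $D$ is the subgroup generated by all definable normal nilpotent subgroups of $G$; that is, $dF(G)=F(G)=D$. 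In particular $dF(G)$ coincides with a definable nilpotent subgroup, so Lemma \ref{defFitting} applies and yields that $F(G)=dF(G)$ is $\emptyset$-definable in $Th(G)$.

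For the uniform (fcd) statement, suppose $\dim(G)=d$ and $F(G)$ has class $n$. Then the uniform part of Corollary \ref{normaldef} gives a formula $\phi_{d,n}(x,\overline{y})$ and a tuple $\overline{a}\in G$ such that $\phi_{d,n}(G,\overline{a})$ is a normal nilpotent subgroup of class $n$ containing $F(G)$; as above this forces $\phi_{d,n}(G,\overline{a})=F(G)$. Applying Lemma \ref{defFitting} with $\phi=\phi_{d,n}$ shows $F(G)$ is $\emptyset$-definable in $Th(G)$, completing the proof.

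The only subtlety — and the step I expect to require the most care — is making sure the argument is not circular: the $\emptyset$-definability of $F(G)$ rests on Lemma \ref{defFitting}, whose hypothesis is precisely that $dF(G)$ equals \emph{some} definable nilpotent subgroup. The point is that establishing $dF(G)=F(G)$ does not require knowing in advance that $F(G)$ is $\emptyset$-definable or even definable with parameters in a canonical way; it only requires one concrete definable normal nilpotent subgroup that happens to contain $F(G)$, which Corollary \ref{normaldef} supplies using the nilpotence of $F(G)$ from Lemma \ref{MCFittingnilpotent}. Once $dF(G)$ is seen to coincide with that single definable group, Lemma \ref{defFitting} does the rest. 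I would also remark, as the paper does, that this recovers Wagner's conclusion on the $\emptyset$-definability of $F(G)$ without passing through the theory of bounded left Engel elements.
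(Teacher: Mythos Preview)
Your treatment of the first sentence of the corollary is correct and matches the paper's approach exactly: use Lemma~\ref{MCFittingnilpotent} to know $F(G)$ is nilpotent, apply Corollary~\ref{normaldef} to get a definable normal nilpotent $D\supseteq F(G)$, observe $D\leq F(G)$ by maximality, conclude $F(G)=dF(G)=D$, and invoke Lemma~\ref{defFitting}.

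The fcd clause, however, has a genuine gap. Lemma~\ref{defFitting} only tells you that $dF$ is $\emptyset$-definable in $Th(G)$, i.e.\ there is a parameter-free formula $\psi(x)$ with $\psi(G')=dF(G')$ for every $G'\models Th(G)$. To conclude that $F$ (not just $dF$) is $\emptyset$-definable in $Th(G)$, you must know that $F(G')=dF(G')$ for \emph{every} model $G'$ of $Th(G)$, not merely for $G$ itself. Your fcd paragraph does not supply this: you simply re-apply Lemma~\ref{defFitting} with the uniform formula $\phi_{d,n}$, which yields nothing beyond what you already had in the general $\mathfrak{M}_C$ case. The uniform formula is a red herring here.

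The missing step---and the actual role of fcd---is that having $c$-dimension $d$ is expressible by a single $\mathcal{L}_G$-sentence, so every $G'\equiv G$ also has dimension $d$ and is in particular $\mathfrak{M}_C$. Then the argument of your first paragraph applies inside $G'$ to give $F(G')=dF(G')$, and now the $\emptyset$-formula for $dF$ also defines $F$ across $Th(G)$. This is exactly how the paper closes the argument.
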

\begin{proof}
Let $G$ be an $\mathfrak{M}_C$ group. The Fitting subgroup $F(G)$ is nilpotent by Lemma \ref{MCFittingnilpotent}. As already mentioned, Corollary \ref{normaldef} implies that $F(G)=dF(G)$. Therefore $dF(G)$ is $\emptyset$-definable in $Th(G)$ by Lemma \ref{defFitting} and $F(G)$ thus coincides with an $\emptyset$-definable nilpotent subgroup of $G$. If $G$ is fcd, then any elementary extension $G'$ of $G$ has the same $c$-dimension. In particular, $G'$ is $\mathfrak{M}_C$, so $F(G')=dF(G')$ by Corollary \ref{normaldef}. Thus the Fitting group and definable Fitting group are both given by the same $\emptyset$-definable formula over $Th(G)$.
\end{proof}

At this point, the following question is natural:

\medskip

{\em Is the solvable analogue of Theorem \ref{defenv} true in an $\mathfrak{M}_C$-group?}

\medskip
\noindent
Our ``normalized'' construction yields the following very partial
answer to this question:

\begin{cor}
Let $G$ be an $\mathfrak{M}_C$ group and $H\leq G$ a solvable subgroup.
If there exist nilpotent subgroups
$A, B\leq H$ such that $A\lhd H$ and $H=AB$, then $H$
is contained in a definable solvable subgroup of $G$.
\end{cor}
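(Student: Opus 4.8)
The plan is to apply Theorem~\ref{defenv} twice: once to the normal nilpotent subgroup $A$ to produce a definable envelope $D_A$, and once to $B$, but with this second application carried out inside the definable $\mathfrak{M}_C$ group $N_G(D_A)$, so that the second envelope automatically normalizes the first. Since the corollary asks only for \emph{some} definable solvable subgroup containing $H$, no control of derived length is required.

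First I would apply Theorem~\ref{defenv} to the nilpotent subgroup $A\leq G$, obtaining a definable subgroup $D_A\leq G$ with $A\leq D_A$, with $D_A$ nilpotent, and with $D_A$ being $N_G(A)$-normal, i.e. $N_G(A)\leq N_G(D_A)$. Since $A\lhd H$ we have $H\leq N_G(A)$, hence $H\leq N_G(D_A)$, and in particular $B\leq N_G(D_A)$. The subgroup $N_G(D_A)$ is definable in $G$ because $D_A$ is, and it is $\mathfrak{M}_C$ as a subgroup of $G$. I would then apply Theorem~\ref{defenv} with ambient group $N_G(D_A)$ to the nilpotent subgroup $B$, obtaining a subgroup $D_B$ that is definable in $N_G(D_A)$---hence definable in $G$ after relativizing quantifiers to the $G$-definable set $N_G(D_A)$---with $B\leq D_B$ and $D_B$ nilpotent. (The $N_G(B)$-normality of $D_B$ is not needed here.)

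Because $D_B\leq N_G(D_A)$ by construction, $D_B$ normalizes $D_A$, so $D:=D_AD_B$ is a subgroup of $G$; it is definable, since $g\in D$ iff $\exists\,d\in D_B$ with $gd^{-1}\in D_A$. Clearly $H=AB\subseteq D_AD_B=D$. Finally $D_A\lhd D$ (as $D_B$ and $D_A$ both normalize $D_A$), and $D/D_A\cong D_B/(D_A\cap D_B)$ is a homomorphic image of the nilpotent group $D_B$, hence nilpotent; thus $D$ is an extension of a nilpotent group by a nilpotent group, so $D$ is solvable. This $D$ is the required definable solvable subgroup of $G$ containing $H$.

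The only delicate point is making $D_A$ and $D_B$ multiply to a subgroup: a naively chosen envelope of $B$ in $G$ need not normalize $D_A$, so $D_AD_B$ might not even be a group. This is exactly what the hypothesis $A\lhd H$ buys us---combined with the $N_G(A)$-normality guaranteed by Theorem~\ref{defenv}, it forces $B$ into $N_G(D_A)$, and running the envelope construction for $B$ relative to the (still $\mathfrak{M}_C$) group $N_G(D_A)$ keeps $D_B$ inside $N_G(D_A)$. Nothing in the argument controls the derived length of $D$ in terms of that of $H$, which is why the result is stated only in this restricted ``split'' form; the general solvable case remains the open question preceding this corollary.
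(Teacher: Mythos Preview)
Your argument is correct and is precisely the approach the paper has in mind: the paper states the corollary without proof, remarking only that it follows from the ``normalized'' construction, i.e.\ from the $N_G(H)$-normality clause of Theorem~\ref{defenv}. Your use of that clause to force $B\leq N_G(D_A)$, followed by a second application of Theorem~\ref{defenv} inside the definable $\mathfrak{M}_C$ group $N_G(D_A)$, is exactly the intended mechanism, and your observation that no control of derived length is obtained matches the paper's description of this as a ``very partial'' answer.
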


\section{Acknowledgements}

The authors thank Alexandre Borovik for his stimulating questions and C\'edric Milliet for a careful reading of an earlier draft.

\end{document}